\newtheorem{theorem}{Theorem}[section]
\newtheorem*{theorem*}{Theorem}
\newtheorem{lemma}[theorem]{Lemma}
 \newtheorem{corollary}[theorem]{Corollary}
 \newtheorem{proposition}[theorem]{Proposition}
 \newtheorem*{mainthm*}{Main Theorem}
\theoremstyle{definition}
\theoremstyle{remark}
\newtheorem{remark}[theorem]{Remark}
\definecolor{alert}{rgb}{0.8,0,0.3}
\newcommand{\alert}[1]{%
	\marginpar{%
		\ifodd\value{page} \raggedright \else \raggedleft \fi
		\footnotesize{\textcolor{alert}{#1}}
	}
}
\newcommand{\E}{\mathbb{E}}
\newcommand{\R}{\mathbb{R}}
\newcommand{\Length}{\mathop{\rm Length}\nolimits}
\newcommand{\Prod}[1]{ \left\langle {#1} \right\rangle}
\newcommand{\prodesc}[2]{ \left\langle {#1},{#2} \right\rangle}
\begin{document}

\title[Parabolicity of invariant Surfaces]{Parabolicity of invariant Surfaces}

\author{Andrea Del Prete}
\address{Dipartimento di Ingegneria e Scienze dell'Informazione e Matematica, 
	Universit\`{a} degli Studi dell'Aquila,
	Via Vetoio Loc. Coppito -- 67100 L'Aquila, Italy}
\email{andrea.delprete@graduate.univaq.it}

\author{Vicent Gimeno i Garcia}
\address{Department of Mathematics, Universitat Jaume I-IMAC,   E-12071, 
Castell\'{o}, Spain}
\email{gimenov@mat.uji.es}

\thanks{The first author was partially supported by ``INdAM - GNSAGA Project", codice CUP\_E55F22000270001, and by MCIN/AEI project PID2022-142559NB-I00. The second author was partially supported by the Research grant  PID2020-115930GA-100 funded by MCIN/ AEI /10.13039/50110001103}

\subjclass[2023]{Primary 58J05, 31A05, 53A10,58J65,58D19}

\keywords{Parabolicity, Invariant surfaces, isometries, Killing vector fields }


\dedicatory{}

\begin{abstract}
We present a clear and practical way to characterize the parabolicity of a complete immersed surface that is invariant with respect to a Killing vector field of the ambient space.
\end{abstract}

\maketitle

\section{Introduction} \label{sec:intro}
The intrinsic or extrinsic geometric conditions which guaranties that a surface is a parabolic surface has been largely studied from the beginning of the twentieth century. See for example \cite{Ahl},\cite{Mi} \cite{GriExp}, \cite{Hub},\cite{GimSol}. This interplay between geometry and analytical properties of functions defined on surfaces is used to classify the surfaces in conformal types.  Recall that a Riemannian manifold is said to be parabolic if every upper bounded subharmonic function is constant. A canonical reference on the parabolicity of manifolds is provided by \cite{GriExp}. In dimension $2$, parabolicity is a conformal property. Specifically, if the surface $(M,g)$ is parabolic, then the surface $(M,e^ug)$ is also parabolic for any smooth function $u:M\to \mathbb{R}$. Parabolic manifolds satisfy a specific maximum principle, as demonstrated in \cite{Alias2009}, for example. Given a non-constant $C^2$ function $u:M\to \mathbb{R}$ with $\sup_Mu=u^*<\infty$, if $M$ is a parabolic manifold, there exists a divergent sequence $\left\{x_k\right\}_{k\in \mathbb{N}}\subset M$ such that
$$
u(x_k)>u^*-\frac{1}{k},\quad {\rm and},\quad \Delta u(x_k)<0.
$$
Parabolicity can also be defined for manifolds $M$ with a non-empty boundary $\partial M$, as shown in \cite{Lopez20031017}. 
A manifold with a boundary is parabolic if and only if every bounded harmonic function is determined by its values on the boundary. In other words, if $f_1,f_2\colon M\to \mathbb{R}$ are two bounded harmonic functions with $f_1(x)=f_2(x)$ for any $x\in \partial M$, then $f_1(y)=f_2(y)$ for every $y\in M$. Given a complete Riemannian manifold $(M,g)$ and a precompact domain $\Omega\subset M$, every non-bounded connected component of $M-\Omega$ is called an end of $M$ with respect to $\Omega$. In this setting, $M$ is parabolic if and only if all the ends of $M$ with respect to $\Omega$ are parabolic.

In this article, we are interested in studying surfaces that are invariant with respect to a one-parameter group of isometries of the ambient space. For this purpose, we make use of Killing vector fields. A Killing vector field $\xi$ is defined such that $\mathcal{L}_\xi g=0$, where $g$ represents the metric tensor. We will say that a surface is invariant with respect to a Killing field when it is invariant with respect to the one-parameter group of isometries of the ambient manifold associated with $\xi$.

The main result of this paper is the following Theorem:

\begin{mainthm*}\label{MainThm}
    Let $\E $ be a $n$-dimensional Riemannian manifold which admits a complete Killing vector field $\xi\in\mathfrak{X}(\E)$. Assume that an immersed complete regular surface $S\subset\E$ is invariant with respect to the one-parameter group of isometries of $\E$ associated to $\xi$. Assume that there exists $q\in S$ such that $\Vert \xi(q)\Vert \neq0$  and denote by $\gamma(t)\subset S$ a complete curve parameterized by arc length satisfying \[\Prod{\Dot{\gamma}(t),\xi}=0.\]
    Then, $S$ is parabolic if and only if
    \begin{enumerate}
        \item the curve $\gamma$ is compact, or
        \item the integral curves of $\xi$ are compact and there exists $a,b\in\R$, with $a<b,$ such that $\int_{-\infty}^{a}\frac{1}{\|\xi(\gamma(s))\|}ds=\infty$ and $\int_{b}^\infty\frac{1}{\|\xi(\gamma(s))\|}ds=\infty$, or
        \item the integral curves of $\xi$ are non compact and $\int_{-\infty}^{0}\frac{1}{\|\xi(\gamma(s))\|}ds=\infty$ and $\int_{0}^\infty\frac{1}{\|\xi(\gamma(s))\|}ds=\infty.$
    \end{enumerate}
\end{mainthm*}

\begin{remark} We will see in Proposition \ref{prop:unua} that the existence of a point $p\in  S$ where $\xi(p)\neq 0$ allows us to prove that there exist at most two points where the Killing vector field $\xi$ vanishes. In particular, we will prove that, if $\xi_{|S}$ vanishes at exactly two points, then $S$ is diffeomorphic to a sphere and hence it is parabolic and, if $\xi_{|S}$ vanishes at only one point, then the integral curves of $\xi$ must be compact and since $\dot\gamma$ is perpendicular to $\xi$, there exist $a,b\in\R$, with $a<b$, such that $\|\xi(\gamma(s))\|\neq 0$ in $s\in(-\infty,b)\cup(a,\infty)$ and  $\int_{-\infty}^{a}\frac{1}{\|\xi(\gamma(s))\|}ds=\int_{b}^\infty\frac{1}{\|\xi(\gamma(s))\|}ds.$

Observe that if $\xi$ vanishes on all $S$, we cannot find a curve $\gamma\subset S$ which generates $S$, since every curve $\Gamma\subset S$ satisfies the condition $\Prod{\Dot{\Gamma}(t),\xi}=0$. However, this situation does not append in low dimensions. Indeed, \cite[Theorem 5.3]{KobaTrans} assures that each connected component of the set of zeros of $\xi$, ${\rm Zero}(\xi)$, is a closed totally geodesic submanifold of even codimension. In particular,if $n=3$, then $\xi$ vanishes at isolated geodesics of $\E$. If $n=4$, there could exist a surface $S$ where $\xi$ vanishes and its geometric properties does not depend on $\xi$ but only on the geometry of $\E$; for example, we can obtain a Killing vector field in $\mathbb{R}^4$ such that it vanishes identically at the totally geodesic submanifold $\mathbb{R}^2\subset \mathbb{R}^4$, and similarly we can find a Killing vector field in $\mathbb{H}^4$ such that it vanishes at the totally geodesic submanifold $\mathbb{H}^2\subset \mathbb{H}^4$ but $\mathbb{R}^2$ is parabolic and $\mathbb{H}^2$ is hyperbolic. If $n>4$, if $\xi$ vanishes on a surface $S\subset\E$, then $S$ can be immersed isometrically in a manifold that does not admit any Killing field a-priori.
\end{remark}
In what follows we prove the Main Theorem and then we show a couple of applications in some simply connected homogeneous manifold.

\section{Proof of the Main Theorem: Parabolicity of surfaces admiting a Killing submersion}
In order to prove the Main Theorem we need to state here that the surface only can admit finitely many points where the Killing vector field vanishes. In fact, $\xi$ vanishes at most on two points of $S$ and in such a case $S$ is diffeomorphic to a sphere. If $\xi$ only vanishes at one point of $S$, then $S$ is diffeomorphic to a plane. In both cases the integral curves of $\xi$ are diffeomorphic to $\mathbb{S}^1$, that is, $S$ is rotationally symmetric. Let us summarize this in the following proposition:
\begin{proposition}\label{prop:unua}Let $\E $ be a $n$-dimensional  Riemannian manifold which admits a complete Killing vector field $\xi\in\mathfrak{X}(\E)$. Assume that an immersed complete regular  surface $S\subset\E$ is invariant with respect to the one-parameter group of isometries of $\E$ associated to $\xi$. Assume that there exists $q\in S$ such that $\Vert \xi(q)\Vert \neq0$.
Then, there are only three options:
\begin{enumerate}
    \item The Killing vector field $\xi$ never vanishes on $S$.
    \item The Killing vector field $\xi$ vanishes at only one point $p\in S$. The surface $S$ is topologically a plane and the integral curves of $\xi$ are compact.
    \item The Killing vector field vanishes at only two points $p, p'\in S$. The surface $S$ is topologically an sphere and the integral curves of $\xi$ are compact.
\end{enumerate}

\end{proposition}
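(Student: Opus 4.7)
The plan is to view $\xi|_S$ as a Killing vector field on $S$ in its own right, use rigidity of Riemannian isometries to upgrade a single zero of $\xi|_S$ to a globally periodic flow, and then read off the topology of $S$ from the orbit space of the resulting circle action.

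First, I would note that because the one-parameter group $\{\phi_t\}_{t\in\R}$ of isometries of $\E$ generated by $\xi$ leaves $S$ setwise invariant, $\xi$ is tangent to $S$ at every point and each $\phi_t|_S$ is an isometry of the induced metric. Hence $\xi|_S$ is a complete Killing vector field on $(S,g|_S)$, and by the hypothesis $\xi(q)\neq 0$ it is not identically zero. Set $Z:=\{p\in S : \xi(p)=0\}$; if $Z=\emptyset$ we are already in case (1), so assume $Z\neq\emptyset$ and pick $p\in Z$.

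Next, I would establish global periodicity of the flow. Since $\xi|_S$ is Killing, the endomorphism $A_p:=(\nabla^{S}\xi|_S)(p)\colon T_pS\to T_pS$ is skew-symmetric, and the standard uniqueness theorem for Killing fields on a connected Riemannian manifold (values and first covariant derivatives at one point determine the field) forces $A_p\neq 0$: otherwise $\xi|_S\equiv 0$, contradicting $\xi(q)\neq 0$. In dimension two, a nonzero skew-symmetric endomorphism has eigenvalues $\pm i\lambda$ with $\lambda>0$, so the linearized flow at $p$ is periodic with period $T:=2\pi/\lambda$. Via the equivariance $\phi_t\circ\exp_p=\exp_p\circ\,d\phi_t|_p$, $\phi_T$ coincides with the identity on a normal neighborhood of $p$, and the rigidity theorem for isometries (an isometry of a connected Riemannian manifold is determined by its $1$-jet at any point) then upgrades this to $\phi_T=\mathrm{id}_S$. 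Consequently the flow of $\xi|_S$ descends to a smooth effective isometric action of $\mathbb{S}^1=\R/T\mathbb{Z}$ on $S$, and every integral curve of $\xi|_S$ is either a point of $Z$ or an embedded circle, so in particular all integral curves of $\xi$ on $S$ are compact.

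Finally, I would classify the topology of $S$ via the orbit space. Near each $p\in Z$ the circle action is conjugate to the standard rotation action on a disc, whence $Z$ is discrete and each $p\in Z$ contributes one boundary point to the quotient $S/\mathbb{S}^1$; outside $Z$ the action is free, so $S/\mathbb{S}^1$ is a connected topological $1$-manifold with boundary whose number of boundary components is $|Z|\geq 1$. By the classification of $1$-manifolds with boundary, $S/\mathbb{S}^1$ is homeomorphic to $[0,\infty)$ or to $[0,1]$. In the first case $|Z|=1$ and $S$ is topologically $\R^2$, giving case (2); in the second case $|Z|=2$ and $S$ is topologically $\mathbb{S}^2$, giving case (3). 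The step I expect to be the main obstacle is this passage from local periodicity of $\phi_t$ near a single zero to global periodicity on all of $S$: it combines the non-degeneracy of the linearization $A_p$ (via Killing-field uniqueness) with the rigidity of Riemannian isometries, and it is precisely what makes the reduction to a smooth circle action---on which the topological dichotomy rests---available.
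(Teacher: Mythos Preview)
Your argument is correct and takes a genuinely different route from the paper. The paper proceeds by direct distance-geometric analysis: starting from a zero $p$, it shows via minimal-geodesic and cut-locus arguments that orbits through nearby regular points are entire geodesic spheres centred at $p$, that any second zero must lie in $\mathrm{cut}(p)$, and conversely that every cut point of $p$ is a zero; it then reads off the plane/sphere dichotomy from whether $\mathrm{cut}(p)$ is empty or a single point. Your approach instead linearises at a zero, uses Killing-field uniqueness to force $A_p\neq 0$, and exploits that in dimension two a nonzero skew endomorphism generates a periodic rotation; rigidity of isometries then globalises the periodicity to an effective $\mathbb{S}^1$-action, and the trichotomy drops out of the classification of the one-dimensional orbit space. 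Your method is shorter and more structural, and it packages the ``integral curves are compact'' conclusion and the topological type of $S$ into a single step via the $\mathbb{S}^1$-action, whereas the paper obtains these through separate geodesic-sphere and connected-sum arguments. The paper's approach, on the other hand, yields finer geometric information along the way (orbits are metric spheres, the second zero is exactly the cut locus of the first) that your orbit-space argument does not make explicit. One small point you leave implicit but should be prepared to justify if pressed: freeness of the action outside $Z$ requires noting that a nontrivial element of the circle fixing a regular point $x$ would have differential fixing $\xi(x)\neq 0$, hence (being an orientation-preserving isometry of $T_xS$) be the identity, contradicting effectiveness.
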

\begin{proof}
Assume that there exists a point $p\in S$ such that $\xi(p)=0$. First of all, we are going to prove that there are no other points with vanishing $\xi$ in a geodesic ball of $S$ centered at $p$ and radius  the injectivity radius of $p$.

Let us start joining $p$, where $\xi(p)=0$,  to the point $q$ (where is assumed that $\xi(q)\neq 0$) with a minimal geodesic segment $\gamma$. Assume that there exists a point $p'$, see figure \ref{Im1}, in $\gamma$ where $\xi(p')=0$.  
\begin{figure}[h!]
\centering
\includegraphics[scale=0.5]{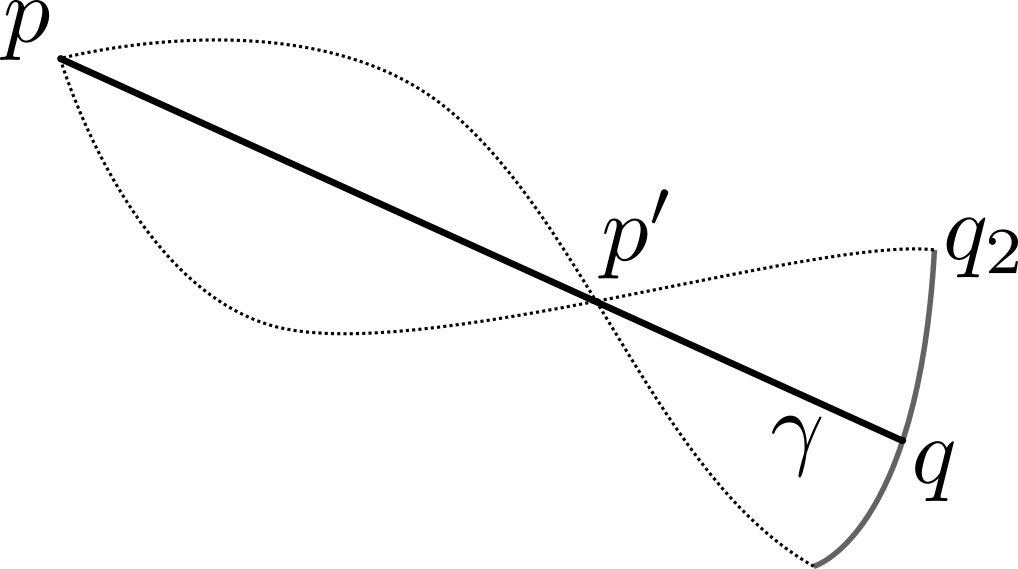}
\caption{If $\xi(p)=0$, $\xi(q)\neq0$ and $\gamma$ is the minimal geodesic segment joining $p$ to $q$, then there is no $p'\in\gamma$ with $\xi(p')=0$. }
\label{Im1}
\end{figure}

Let us denote by $\{\phi_t\}$ the one-parametric group of transformations associated to $\xi$. Let $q_2=\phi_{t_0}(q)$. Since $\xi(q)\neq 0$, we can take $t_0$ sufficiently small such that $q_2\neq q$. Moreover as we are assuming $\xi(p)=\xi(p')=0$, we know that $\phi_{t_0}(p)=p$ and $\phi_{t_0}(p')=p'$. Since $\xi$ is a Killing vector field, $\phi_t$ are isometries and $\gamma_1=\phi_{t_0}\gamma$ is a minimal geodesic segment as well joining $p$, $p'$ and $q_2$. By using the uniqueness of the geodesics, taking into account that $q_2\neq q$ we conclude $\gamma_1\neq \gamma$ and hence we have two different minimal geodesic segments joining $p$ and $p'$, then $p'\in {\rm cut}(p)$ and $\gamma$ does not realizes the distance providing a contradiction. Hence there are no points $p'$ in $\gamma$ with $\xi(p')=0$.

Therefore any point $p_1$ in $\phi_t(\gamma)$, with $0<t<t_0$, at distance less than the injectivity radius of $p$ satisfies that $\xi(p_1)\neq 0$. So, without loss of generality, we can assume that distance between $p$ and $q$ is smaller than the injectivity radius of $p$.

Take a point $q'$ in the minimal geodesic joining $p$ and $q$ at distance less than the injectivity radius of $p$. Then, $\xi(q')\neq 0$. Take the orbit the orbit by $\phi_t$ of $q'$,
$$
\mathcal{O}_{q'}=\left\{\phi_t(q')\, : \, t\in \mathbb{R} \right\}
$$
is a set of points of a geodesic sphere of radius ${\rm dist}(p,q')$ centered at $p$. Indeed, $\mathcal{O}_{q'}$ is  the geodesic sphere of distance ${\rm dist}(p,q)$ centered at $p$. Because otherwise  should there exists a point $p'$ of accumulation of $\mathcal{O}_{q'}$ in the geodesic sphere of distance ${\rm dist}(p,q)$ with $\xi(p')=0$ and  $\mathcal{O}_{q'}$
should be a subset of the metric sphere of distance ${\rm dist}(p',q)$ centered at $p'$ but this is a contradiction because $p'$ is a point of accumulation of $\mathcal{O}_{q'}$ and hence ${\rm dist}(p',\mathcal{O}_{q'})=0$. Therefore,
$$
\mathcal{O}_{q'}=\{ r \in S\, :\, {\rm dist}(p,r)={\rm dist}(p,q)\},
$$
which is compact and, as we have seen before, there are no points with $\xi=0$ in the minimal segments joining $p$ with the points of $\mathcal{O}_{q'}$.
In particular, we proved that, if $R$ is smaller, then the injectivity radius of $p$, then $\xi\neq0$ in $B_R(p)\setminus\left\{p\right\}$, where $B_R(p)$ is the geodesic ball centered at $p$ with radius $R$. Furthermore, $B_R(p)$ is foliated by integral curves of $\xi$. It can be proved that any point $p'$ with $\xi(p')=0$ then $p'\in {\rm cut}(p)$, see \cite[Corollary 5.2]{KobaTrans}. Moreover we can prove that if $q\in {\rm cut}(p)$ then $\xi(q)=0$. Suppose, by the contrary, that $\xi(p)=0$ and ${\rm cut}(p)\neq \emptyset$. Take a point $q\in {\rm cut}(p)$ let us consider the minimal geodesic segment 
$$
t\mapsto \gamma(t)=\exp_p(tu),\quad {\rm with}\quad \Vert u\Vert=1,
$$
such that $\gamma(0)=p$, $\gamma({\rm dist}(p,q))=q$ and $\gamma$ minimizes distance for $t\in [0,{\rm dist}(p,q)]$ but it is not a minimal geodesic for $t>{\rm dist}(p,q)$. If we suppose that $\xi(q)\neq 0$, the orbit $\mathcal{O}_q$  is the metric sphere $\partial B_{{\rm dist}(p,q)}(p)$, it has positive length and it is  diffeomorfic to $\mathbb{S}^1$. Moreover since $\phi_t$ acts by isometries every point   $q' \in \partial B_{{\rm dist}(p,q)}(p)$ is a cut point of $p$ and there exists $u'\in T_pM$ such that the geodesic segment 
$$
t\mapsto \gamma_1(t)=\exp_p(tu'),
$$
satisfies that $\gamma_1({\rm dist}(p,q))=q'$ and $\gamma_1$ minimizes distance for $t\in [0,{\rm dist}(p,q)]$ but it is not a minimal geodesic for  $t>{\rm dist}(p,q)$. Thence ${\rm cut}(p)=\partial B_{{\rm dist}(p,q)}(p)$ and, see for instance \cite[Lemma 4.4]{Sakai}, 
$$
S= B_{{\rm dist}(p,q)}(p)\cup \partial B_{{\rm dist}(p,q)}(p).
$$
But this is a contradiction with the completeness of $S$. Therefore, we can conclude that if $\xi(p)=0$ and $q\in {\rm cut}(p)$, then $\xi(q)=0$ and we can state that if $\xi(p)=0$,
$$
{\rm cut}(p)=\{ q\in S-\{p\}\, :\,  \xi(q)=0\}.
$$

Hence, if we suppose that $\xi(p)=0$ and ${\rm cut}(p)=\emptyset$,  $p$ is the only point in $S$ where the Killing vector field vanishes and $S$ is rotationally symmetric plane.

Now suppose again that $\xi(p)=0$ and ${\rm cut}(p)\neq \emptyset$. Take a point $p' \in {\rm cut}(p)$, and hence with $\xi(p')=0$, and such that ${\rm dist}(p,p')={\rm inj}(p)$. Thence for any point $q$ between $p$ and $p'$ such that $d(p',q)$ is smaller than the injectivity radius of $p'$, see figure \ref{Im2}, the orbit $\mathcal{O}_q$ need to be a geodesic sphere centered at $p$ and simultaneously a geodesic sphere centered at $p'$, namely,
$$
\mathcal{O}_q=\partial B_{{\rm dist}(p,q)}(p)=\partial B_{{\rm dist}(p',q)}(p').
$$
\begin{figure}[h!]
\centering
\includegraphics[scale=0.4]{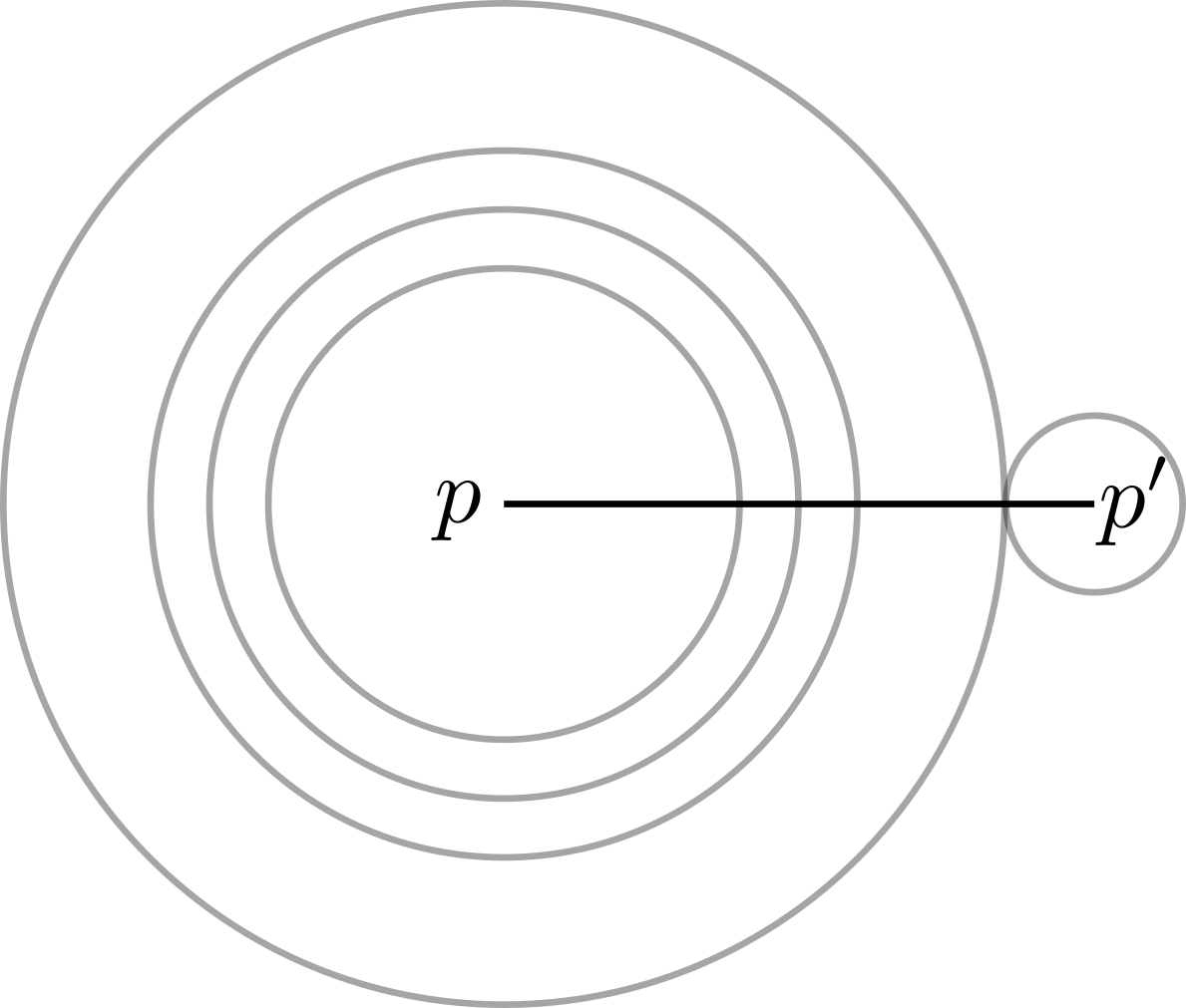}
\caption{If $S$ has two points $p$ and $p'$ where $ \xi$ vanish, then the points $p$ and $p'$ are antipodals points of a rotationally symmetric sphere.}
\label{Im2}
\end{figure}
This implies that $S$ is the connected sum
$$
S=\overline{B_{{\rm dist}(p,q)}(p)} \# \overline{B_{{\rm dist}(p',q)}(p')}.
$$
In particular, $S$ is a sphere and there exists no other point in $S$ where $\xi$ vanishes.

Summarizing everything, if we have a point $p\in S$ with $\xi(p)=0$ there are only two options: or ${\rm cut}(p)=\emptyset$ and $S$ is a rotationally symmetric plane without other points with vanishing $\xi$, or ${\rm cut}(p)$ contains only one point $p'$, where $\xi(p')=0$ and $S$ is a rotationally symmetric sphere. \end{proof}

By using the above proposition We can  remove a compact set $K$ of $S$ and the Killing vector field never vanishes on $S-K$. But $\Sigma$ is parabolic if and only if  $S-K$ is parabolic (see for instance \cite{GriExp}). Then, in order to simplify the discussion of the proof we are assuming that $\xi$ never vanishes on $S$, otherwise we can do the same argument but for $S-K$.

Since the surface $S$ is invariant by the one-parameter group of isometries $G_\xi=\{\phi_t\}$ associated  to the Killing vector field $\xi$, \emph{i.e.},
$
G_\xi(S)=S
$,
we will also assume that $G_\xi$  acts freely and properly on $S$, otherwise $G$ is not closed in $\mathrm{Iso}(S)$ with the compact-open topology and \cite[Proposition]{Lynge} implies that $\xi=X_1+X_2$ where $X_1$ and $X_2$ are two Killing vector fields with compact orbits satisfying $[X_1,X_2]=0$, that is, $S$ is a torus and hence it is a parabolic surface. Then, we can endow $S/G_\xi$ with a smooth structure and with a Riemannian metric tensor in such a way that the projection $\pi_2: S\to S/G_\xi$ is a Killing submersion as well.  Furthermore, since $\Prod{\Dot{\gamma}(t),\xi}=0$, the curve $\gamma$ is diffeomorfic to the set  $S/G_\xi$ and because we are using arc-length parametrization they are indeed isometric manifolds as well.

The Main Theorem is therefore equivalent to Theorem \ref{teo0} where we are proving the general case on surfaces which admits a Killing submersion. In order to prove this theorem we will prove Lemmas \ref{lemconf}, \ref{lemdua},\ref{lemtria},\ref{lemkvara}, and \ref{lemkvina}. In Lemma \ref{lemconf} we will prove that a conformal change on the metric tensor preserves the parabolicity for surfaces. Indeed, parabolicity is related to the conformal type in dimension $2$. In Lemma \ref{lemdua} we will prove that given a Killing submersion we can perform a conformal change with a basic function in the total space and in the base manifold in such a way that with respect to the new metric tensors the submersion remains as a Killing submersion. In Lemma \ref{lemtria} we will prove that a Killing submersion on a surface with constant norm of the Killing vector field has non-negative Gaussian curvature. In Lemma \eqref{lemkvara} we will prove that a complete surface with  non-negative Gaussian curvature  is a parabolic surface. Finally, with this previous lemmas and the technical lemma \ref{lemkvina} where is obtained the expression of the Laplacian of a basic function we can prove Theorem \ref{teo0} which is equivalent to the Main theorem. 

\begin{lemma}\label{lemconf}
In dimension $2$, parabolicity is preserved under conformal changes in the metric tensor.
\end{lemma}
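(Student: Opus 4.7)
The plan is to reduce the statement to the well-known fact that in dimension two the sign of the Laplacian is a conformal invariant. Let $(M,g)$ be a Riemannian surface and let $\tilde g = e^{u} g$ for some $u\in C^\infty(M)$. I would first record the transformation law for the Laplace--Beltrami operator under a conformal change of metric: for $\tilde g = e^{2\varphi} g$ on an $n$-manifold one has
\[
\Delta_{\tilde g} f \;=\; e^{-2\varphi}\bigl(\Delta_g f + (n-2)\, g(\nabla \varphi, \nabla f)\bigr),
\]
which in dimension $n=2$ loses its gradient term and reduces to
\[
\Delta_{\tilde g} f \;=\; e^{-u}\, \Delta_g f.
\]
This identity follows directly from the coordinate formula $\Delta_h f = \tfrac{1}{\sqrt{\det h}}\partial_i\bigl(\sqrt{\det h}\, h^{ij}\partial_j f\bigr)$, using $\det \tilde g = e^{2u}\det g$ and $\tilde g^{ij} = e^{-u} g^{ij}$, so that the $\sqrt{\det h}$ factors cancel precisely when $n=2$.

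Because $e^{-u}>0$, the sign of $\Delta f$ is independent of whether we use $g$ or $\tilde g$. In particular, a function $f\in C^2(M)$ satisfies $\Delta_g f \geq 0$ if and only if $\Delta_{\tilde g}f \geq 0$, so the class of subharmonic functions is the same for both metrics. Boundedness from above is a property of $f$ alone and does not depend on the choice of metric, hence the classes of upper bounded subharmonic functions for $g$ and for $\tilde g$ coincide.

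Invoking the definition of parabolicity recalled in the introduction --- $(M,g)$ is parabolic if and only if every $C^2$ upper bounded subharmonic function is constant --- the equivalence of these two classes immediately yields that $(M,g)$ is parabolic if and only if $(M,\tilde g)$ is parabolic. There is no genuine obstacle in the argument; the only point to be careful about is the conformal transformation formula for $\Delta$, which is clean exactly because the factor $(n-2)$ vanishes in dimension two.
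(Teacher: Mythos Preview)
Your proof is correct and follows essentially the same approach as the paper: both arguments rest on the identity $\Delta_{\tilde g}=e^{-u}\Delta_g$ in dimension two (the paper writes it as $\overline{\Delta}=\Delta/f^2$ for $\overline g=f^2g$), from which the coincidence of the classes of bounded subharmonic functions, and hence the equivalence of parabolicity, follows immediately. Your write-up is in fact slightly more detailed, spelling out the general $n$-dimensional transformation law and why the gradient term drops out for $n=2$.
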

\begin{proof}
    Let $(M,g)$ be a $2$-dimensional Riemannian manifold and consider the conformal change of metric 
    $\overline{g}=f^2g$
    given by the positive function $f:M\to \mathbb{R}_+$, then, the Laplacian $\overline{\Delta}$ with respect to the metric tensor $\overline{g}$ is related with the Laplacian $\Delta$ with respect to the metric tensor $g$ by (see \cite{Chavel} for instance)
    $$
\overline{\Delta}=\frac{\Delta}{f^2}.
    $$
    Then, $(M,g)$ admits non-constant bounded subharmonic functions if and only if $(M,f^2g)$ admits bounded subharmonic functions.
\end{proof}
\begin{lemma}\label{lemdua}
    Let $\pi:M\to B$ be a surjective Killing submersion from to the complete Riemannian manifold $(M,g_M)$ to $(B,g_B)$ with complete Killing vector field $\xi \in \mathfrak{X}(M)$. Let $f:B\to \mathbb{R}$ be a smooth and positive function. Then, $\pi:M\to B$ is also a Killing submersion from $(M,(f\circ \pi)^2g_M)$ to $(B,f^2g_B)$ with the complete Killing vector field $\xi$. Furthermore, 
    $(M,(f\circ \pi)^2g_M)$ is complete if $(B,f^2g_B)$ is complete.
\end{lemma}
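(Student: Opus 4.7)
The plan is to verify the defining properties of a Killing submersion for the new data $\tilde g_M := (f\circ\pi)^2 g_M$ and $\tilde g_B := f^2 g_B$ one at a time, and then handle the completeness claim separately.

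First I would check that $\pi:(M,\tilde g_M)\to (B,\tilde g_B)$ is still a Riemannian submersion. The crucial observation is that $f\circ\pi$ is basic, i.e.\ constant along the fibres of $\pi$, so for any $p\in M$ the conformal factor on $M$ matches the one on $B$: $(f\circ\pi)^2(p)=f(\pi(p))^2$. Conformal changes preserve orthogonality, so the horizontal distribution (the orthogonal complement of $\xi$) is unchanged. For a horizontal vector $v\in T_pM$,
$$
\tilde g_M(v,v)=f(\pi(p))^2 g_M(v,v)=f(\pi(p))^2 g_B(\pi_*v,\pi_*v)=\tilde g_B(\pi_*v,\pi_*v),
$$
using that $\pi$ was already a Riemannian submersion.

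Next I would verify that $\xi$ remains Killing for $\tilde g_M$. Expanding the Lie derivative,
$$
\mathcal{L}_\xi\tilde g_M=\xi\bigl((f\circ\pi)^2\bigr)\,g_M+(f\circ\pi)^2\,\mathcal{L}_\xi g_M.
$$
The second term vanishes since $\xi$ was Killing for $g_M$, and for the first term $\xi(f\circ\pi)=df(\pi_*\xi)=0$ because $\xi$ is vertical. Hence $\mathcal{L}_\xi\tilde g_M=0$. Completeness of $\xi$ is a statement about its flow, which depends only on the vector field and not on the metric, so $\xi$ remains complete.

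For the completeness claim, the idea is to use the bundle structure of the Killing submersion. Given a Cauchy sequence $\{p_n\}\subset(M,\tilde g_M)$, projection contracts distances in any Riemannian submersion, so $\{\pi(p_n)\}$ is Cauchy in $(B,\tilde g_B)$ and, by assumption, converges to some $b^*\in B$. Around $b^*$ I would build a local trivialisation of $\pi$: take a small $\tilde g_B$-geodesic ball $U\ni b^*$ and horizontally lift the radial geodesics from $b^*$ to points of $U$ starting at a fixed $p^*\in\pi^{-1}(b^*)$ (horizontal lifts of curves of finite length exist because $\xi$ is complete, so the Ehresmann connection given by the horizontal distribution is complete). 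This yields a section $\sigma:U\to M$, and then $(b,t)\mapsto \phi_t(\sigma(b))$ parametrises $\pi^{-1}(U)$ by $U\times F$, where $F$ is either $\mathbb{R}$ or $\mathbb{S}^1$ (the orbit of $\xi$) endowed with a complete one-dimensional metric (scaling by the constant $f(\pi(p))$ along each fibre preserves completeness). In these coordinates the Cauchy condition on $\{p_n\}$ forces convergence in both factors, so $p_n$ converges in $M$.

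The main obstacle is precisely this last step: while each ingredient (contraction of distances under $\pi$, invariance of the horizontal distribution under $\phi_t$, completeness of the fibres) is easy, turning the Cauchy condition on $M$ into simultaneous Cauchy conditions on the base and fibre requires the existence of global horizontal lifts of base curves of bounded length — which is where the completeness of $\xi$ is used essentially. Once that is in place, the decomposition $\pi^{-1}(U)\cong U\times F$ makes the completeness argument routine.
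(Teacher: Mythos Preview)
Your proposal is correct and follows essentially the same route as the paper: verify directly that $\pi$ stays a Riemannian submersion and $\xi$ stays Killing via the Lie-derivative identity $\mathcal{L}_\xi\bigl((f\circ\pi)^2 g_M\bigr)=\xi\bigl((f\circ\pi)^2\bigr)g_M+(f\circ\pi)^2\mathcal{L}_\xi g_M=0$, then prove completeness by projecting a Cauchy sequence to $(B,\tilde g_B)$, using its completeness, and trapping the sequence in a trivialised tube $\pi^{-1}(K)\cong K\times F$. The only cosmetic difference is that the paper simply takes a local section $F_0:K\to M$ and makes the fibre step explicit via the inequality $\Vert p_i-p_j\Vert_M\ge(\min_K\mu)\,|t_i-t_j|$, while you instead construct the section by horizontal lifts and invoke Ehresmann completeness of $\xi$; both reach the same conclusion.
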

\begin{proof}
    Since $\xi$ is a Killing vector field of $g_M$, the Lie derivative of the metric tensor vanishes, \emph{i.e.},
    $$
\mathcal{L}_\xi g_M=0.
    $$
    Then
    $$
\mathcal{L}_\xi\left((f\circ \pi)g_M\right)=\xi(f\circ \pi)\cdot g_M+(f\circ \pi)\mathcal{L}_\xi g_M=0,
    $$
    and hence, $\xi$ is also a Killing vector field for $(f\circ \pi)g_M$. Likewise, since $\pi:(M,g_M)\to (B,g_B)$ is a Riemannian submersion, for any $v\in T_pM$ with $g_M(v,\xi)=0$,
    $$
g_M(v,v)=g_B(d\pi(v),d\pi(v)).
    $$
Then, for any $v\in T_pM$ with $(f\circ \pi)g_M(v,\xi)=0$,
$$
(f\circ \pi)g_M(v,v)=f(\pi(p))g_B(d\pi(v),d\pi(v)),
$$
and hence $\pi:(M,(f\circ \pi)g_M)\to (B, fg_B)$ is a Riemannian submersion.

To prove that $M$ is complete, we consider an arbitrary Cauchy sequence $\left\{p_n\right\}_n$ in $M$ and prove that it is convergent.
We consider the sequence $\left\{q_n=\pi(p_n)\right\}_n\subset B.$
First notice that $\Prod{v,v}_M \geq\Prod{d\pi(v),d\pi(v)}_B$ for any point $p\in M$ and any tangent vector field $v\in T_pM.$ Then, $\Length_M(\gamma)\geq\Length_B(\pi(\gamma))$ for any curve $\gamma\subset M$. It follows that $\left\{q_n\right\}_n$ is a Cauchy sequence in $B$ and, since $BM$ is complete, $\left\{q_n\right\}_n$ converges to a point $q\in B$. In particular, we can assume that $\left\{q_n\right\}_n$ is contained in a compact and simply connected subset $K\subset B$ Let $F_0\colon K\to M$ be a local section, then, for any $n,$ there exists $t_n\in\R$ such that $p_n=\phi_{t_n}(q_n).$ 	
Denote by $c=\min_K\mu$. Then, for any $p\in\pi^{-1}(K)$ and any vector field $v\in T_pM$, we have $\prodesc{v}{v}_M\geq c\prodesc{d\pi^\bot(v)}{d\pi^\bot(v)}_\R$. This implies that $\|p_i-p_j\|_M\geq c|t_i-t_j|$ for any $i,j\in\mathbb{N},$ that is, $\left\{t_n\right\}_n$ is a Cauchy sequence in $(\R,g_{euc})$. Since $(\R,g_{euc})$ is complete, we can assume that there exist $a,b\in\R$ such that $t_n\in[a,b]$ for any $n$. It follows that $\left\{p_n\right\}_n$ is contained in the compact subset of $\pi^{-1}(K)$ delimited by $\phi_a(F_0)$ and $\phi_b(F_0).$
Hence, $\left\{p_n\right\}_n$ is a Cauchy sequence in a compact domain, that is convergent and this completes the proof.
\end{proof}

\begin{lemma}\label{lemtria}
    Let $\pi:S\to B$ be a Killing submersion with Killing vector field of constant norm. Then, if ${\rm dim}(S)=2$, $S$ has non-negative Gaussian curvature. 
\end{lemma}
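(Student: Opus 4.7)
The plan is to work in local coordinates adapted to the Killing submersion and reduce to a direct curvature computation. Because $\dim S = 2$ the base $B$ is one-dimensional, and since Gaussian curvature is a pointwise invariant it suffices to verify non-negativity in any adapted chart around an arbitrary point of $S$.

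First, I would choose a local horizontal section $\gamma(s)$ of $\pi$, parametrized by arc length and satisfying $\prodesc{\dot\gamma}{\xi} = 0$, and then build local coordinates on $S$ via $(s,t) \mapsto \phi_t(\gamma(s))$, where $\phi_t$ denotes the flow of $\xi$. Since each $\phi_t$ is an isometry preserving $\xi$, both the horizontality $\prodesc{\dot\gamma}{\xi} = 0$ and the unit length $\|\dot\gamma\| = 1$ propagate to $\partial_s = d\phi_t(\dot\gamma)$. Combined with the Killing identity $\xi \prodesc{\xi}{\xi} = 0$, which forces $\|\xi\|$ to be constant along fibres, this yields that the metric takes the warped-product form
\[
g = ds^2 + \mu(s)^2\, dt^2, \qquad \mu(s) := \|\xi(\gamma(s))\|.
\]

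Next, I would invoke the classical Gaussian curvature formula for an orthogonal metric of this shape, namely
\[
K = -\frac{\mu''(s)}{\mu(s)},
\]
and conclude using the hypothesis: since $\|\xi\|$ is assumed constant on $S$, the function $\mu$ is a positive constant, so $\mu'' \equiv 0$ and therefore $K \equiv 0 \geq 0$, as desired.

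I do not anticipate a genuine obstacle here. The only subtle point is checking that the adapted coordinate $s$ remains an arc-length parameter orthogonal to $\xi$ throughout the chart rather than just on the initial slice, but this is immediate from the fact that $\phi_t$ is a metric isometry that fixes $\xi$. Everything else is the standard warped-product curvature calculation.
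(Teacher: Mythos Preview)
Your proof is correct and takes a different route from the paper's. The paper proceeds via a coordinate-free curvature-tensor computation: it extends a unit horizontal vector $v$ to a horizontal geodesic field $X$, sets $Y=\nabla_X\xi$, and after expanding $\langle R(X,\xi)X,\xi\rangle$ and repeatedly using the Killing equation together with the constant-norm hypothesis arrives at $\mathrm{sec}(v,\xi)=\|Y\|^2\geq 0$. Your argument instead exploits the two-dimensionality directly by setting up adapted $(s,t)$-coordinates in which the metric becomes the warped product $ds^2+\mu(s)^2\,dt^2$; the constant-norm hypothesis then makes $\mu$ constant and the standard formula $K=-\mu''/\mu$ gives $K=0$ immediately. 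Your approach is more elementary and in fact yields the sharper conclusion $K\equiv 0$ (which the paper's computation also gives once one notices that in dimension two $\nabla_X\xi$ is orthogonal to both $X$ and $\xi$ and hence vanishes, but this is not made explicit there). The paper's tensorial argument, on the other hand, is phrased so that it would adapt to higher-dimensional total spaces with little change.
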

\begin{proof}
    Given a point $p\in S$ and an horizontal vector $v\in \xi^\perp(p)$ with unit-length, $\Vert v\Vert=1$, in order to obtain the Gaussian curvature, \emph{i.e.}, the sectional curvature ${\rm sec}(v, \xi)$ of the plane spanned by $v$ and $\xi$, let us consider a vector field $\overline X\in\mathfrak{X}(B)$ defined in a neighborhood $U\ni\pi(p)$, such that $\overline X(\pi(p))=d\pi(v)$ and with vanishing covariant derivative $\nabla^B_{\overline X}\overline X=0$  in $B$, \emph{i.e.}, a geodesic vector field. Then, the lift $X\in \mathfrak{X}(S)$ of $\overline X$ defined in $\pi^{-1}(U) \ni p$ satisfies
\begin{equation}
\nabla_XX=(\nabla_XX)^H+\langle \nabla_XX,\xi\rangle\xi=-\langle X,\nabla_X\xi\rangle=0.
\end{equation}
Where here  the superscript $H$ denotes the horizontal part of a vector and we have used that since $\xi$ is a Killing vector fiels $\langle X,\nabla_X\xi\rangle=0$. Then,
$$
\begin{aligned}
{\rm sec}(v, \xi)=&\langle R(X,\xi)X,\xi\rangle=\langle \nabla_\xi\nabla_XX-\nabla_X\nabla_\xi X+\nabla_{[X,\xi]}X,\xi\rangle\\
=&\langle-\nabla_X\nabla_\xi X+\nabla_{[X,\xi]}X,\xi\rangle=\langle-\nabla_X\left([\xi, X]+\nabla_X\xi\right)+\nabla_{[X,\xi]}X,\xi\rangle\\
=&\langle\nabla_X\left([X,\xi]-\nabla_X\xi\right)+\nabla_{[X,\xi]}X,\xi\rangle
\end{aligned}
$$
In order to simplify the expression let us define the following vector fields $Y:=\nabla_X\xi$ and $Z:=[X,\xi]$.  Observe that both $X,Y$ are horizontal vector fields because $\xi$ has constant norm and thence $\langle \nabla_X\xi,\xi\rangle=\frac{1}{2}X\Vert \xi\Vert=0$. Moreover
$$
\langle Z,\xi\rangle=\langle \nabla_X\xi-\nabla_\xi X,\xi\rangle=\langle \nabla_X\xi,\xi\rangle-\langle\nabla_\xi X,\xi\rangle=\frac{1}{2}X\langle\xi,\xi\rangle=0
$$
Therefore,
$$
\begin{aligned}
{\rm sec}(v, \xi)=&\langle\nabla_X\left(Z-Y\right)+\nabla_{Z}X,\xi\rangle=\langle\nabla_XZ+\nabla_Z X,\xi\rangle-\langle\nabla_XY,\xi\rangle\\
=&\Vert Y\Vert^2\geq 0.
\end{aligned}
$$
where we have used
$$
\langle \nabla_XZ,\xi\rangle=-\langle Z,\nabla_X\xi\rangle=\langle X,\nabla_Z\xi\rangle=-\langle \nabla_ZX,\xi\rangle
$$
and
$
\langle\nabla_XY,\xi\rangle=-\langle Y,\nabla_X\xi\rangle=-\Vert Y\Vert^2
$.\end{proof}

\begin{lemma}\label{lemkvara}
    Let $S$ be a complete surface with non-negative Gaussian curvature. Then, $S$ is a parabolic manifold. 
\end{lemma}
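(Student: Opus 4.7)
The plan is to combine a volume-comparison theorem in curvature from above (here, a lower bound on the Gauss curvature) with a volume-growth test for parabolicity. More precisely, I would fix an arbitrary point $p\in S$ and control the area of geodesic balls $B_r(p)\subset S$ from above, then feed that bound into the Karp--Grigor'yan criterion which states that a complete Riemannian manifold is parabolic whenever
\[
\int^{\infty} \frac{r\,dr}{\mathrm{Area}(B_r(p))} = \infty,
\]
for some (equivalently, any) point $p$. This criterion is recorded, for instance, in \cite{GriExp}.

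The first step is the area estimate. Since $\dim S=2$, the Gaussian curvature coincides with the sectional curvature and with the Ricci curvature; the hypothesis $K\geq 0$ therefore amounts to $\mathrm{Ric}\geq 0$. Bishop's volume comparison theorem then gives
\[
\mathrm{Area}(B_r(p))\leq \pi r^2
\]
for every $r>0$, with $\pi r^2$ being the area of a Euclidean disk of radius $r$, the area of a geodesic ball in the model space of zero curvature. (One does need to know that the exponential map at $p$ is defined on all of $T_pS$, which follows from completeness; beyond the cut locus the inequality only improves, since that portion contributes nothing to the polar-coordinate volume element comparison.)

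The second step is to plug this estimate into the volume criterion:
\[
\int_1^{\infty} \frac{r\,dr}{\mathrm{Area}(B_r(p))} \;\geq\; \int_1^{\infty}\frac{r\,dr}{\pi r^{2}} \;=\; \frac{1}{\pi}\int_1^{\infty}\frac{dr}{r} \;=\; \infty,
\]
so the Karp--Grigor'yan test applies and $S$ is parabolic.

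There is no serious analytical obstacle here; the work has already been done in the background results. The only delicate point is a bibliographic one, namely to invoke the right form of Bishop's theorem (it requires only a Ricci lower bound, which is automatic in dimension two from the Gaussian curvature hypothesis) and to use a statement of the volume criterion that applies to complete manifolds without further regularity assumptions on the cut locus. Both are standard and can be cited directly from \cite{GriExp} and \cite{Chavel}.
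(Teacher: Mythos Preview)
Your argument is correct, but it follows a different path from the paper's. The paper invokes Huber's theorem \cite{Hub}: if the negative part of the Gaussian curvature has finite total integral (trivially satisfied when $K\geq 0$), then the surface is conformally equivalent to a compact Riemann surface with finitely many punctures, and hence parabolic. Your route is via Bishop's volume comparison together with the Karp--Grigor'yan volume-growth criterion, both standard and correctly applied. The trade-off is that Huber's theorem delivers a stronger structural conclusion (the conformal type is actually identified, not merely shown to be parabolic) and in principle covers surfaces with some negative curvature as long as its total mass is finite, whereas your approach stays entirely within comparison geometry and avoids the considerably deeper analytic input of Huber's result. For the purposes of this lemma, where only $K\geq 0$ and parabolicity are at stake, your proof is arguably more elementary and more directly tied to the references already used elsewhere in the paper.
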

\begin{proof}
   This lemma is a direct consequence of  a well known theorem due to Huber \cite{Hub} which states that if the negative part of the curvature $K_{-}=\max\{-K,0\}$ has finite integral, namely,
\begin{equation}\label{finite-negative}
\int_SK_{-}\,dA<\infty,
\end{equation}then, $\int_M K\, dA\leq \chi(M)$ and $M$ is conformally equivalent to a compact Riemann surface with finitely many punctures and hence it is a parabolic surface.\end{proof}
\begin{lemma}\label{lemkvina} Let $\pi:S\to \mathbb{R}$ be a Killing submersion with never vanishing Killing vector field $\xi\in \mathfrak{X}(M)$. Let $f:\mathbb{R}\to\mathbb{R}$ be a smooth function. Then
$$
\Delta^S(f\circ \pi)(q)=\frac{1}{\mu(x)}\frac{d}{dx}\left(\mu(x)\frac{df}{dx}\right)_{x=\pi(q)},
$$
  where  $\mu(x)$ is the norm of the Killing vector field $\Vert \xi(p)\Vert$ for any $p\in \pi^{-1}(x)$.
    \end{lemma}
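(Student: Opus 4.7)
The plan is to produce adapted local coordinates on $S$ in which the metric takes a warped-product form $ds^2+\mu(s)^2\,dt^2$, so that the computation of $\Delta^S$ applied to a basic function collapses to the one-dimensional Sturm--Liouville operator that appears in the statement.

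First, I would fix $q\in S$ and choose a horizontal arc-length curve $\gamma$ through $q$, that is, a curve with $\langle\dot\gamma,\xi\rangle=0$ and $\|\dot\gamma\|=1$. Because $\pi$ is a Riemannian submersion and $\dot\gamma$ is horizontal, $\pi\circ\gamma$ has unit speed on $\mathbb{R}$, so after a translation one may assume $\pi(\gamma(s))=s$. I would then define $F(s,t)=\phi_t(\gamma(s))$, where $\{\phi_t\}$ is the flow of $\xi$; since $\xi$ does not vanish, this is a local diffeomorphism onto a neighborhood of $q$, giving coordinates $(s,t)$ on $S$.

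Next, I would read off the metric in these coordinates. One has $\partial_tF=\xi$ and $\partial_sF=d\phi_t(\dot\gamma(s))$, and because $\phi_t$ is an isometry preserving $\xi$, the vector $\partial_sF$ stays of unit length and orthogonal to $\xi$, while $\|\xi\|$ is constant along the orbits of $\xi$. Consequently $\|\xi(F(s,t))\|=\mu(s)$ depends only on $s$, and the pulled-back metric is $g=ds^2+\mu(s)^2\,dt^2$, with volume element $\mu(s)\,ds\,dt$. Since $\xi$ is vertical for $\pi$, we have $\pi\circ F(s,t)=s$, hence $(f\circ\pi)\circ F(s,t)=f(s)$; substituting this into the intrinsic formula $\Delta u=\frac{1}{\sqrt{\det g}}\partial_i(\sqrt{\det g}\,g^{ij}\partial_j u)$ immediately produces the claimed expression.

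The only delicate point is checking that $\|\xi\|$ depends only on $x=\pi(q)$ and not on the fiber coordinate $t$; this follows at once from $\phi_t^\ast\xi=\xi$ combined with the fact that $\phi_t$ is an isometry. Thus no genuine obstacle remains beyond the bookkeeping described above.
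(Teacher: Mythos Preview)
Your argument is correct, but it differs from the paper's proof. The paper invokes a general formula for the Laplacian of a basic function under a Riemannian submersion (cited from \cite{Bessa2012}), namely $(\Delta^M\widetilde f)_q=(\Delta^B f)_x-\langle\nabla^Bf,\,d\pi_q(H_q)\rangle_B$, where $H$ is the mean curvature vector of the fiber; it then computes $H=\tfrac{1}{\mu^2}\nabla_\xi\xi$ and uses the Killing equation to get $\langle X,\nabla_\xi\xi\rangle=-\langle\xi,\nabla_X\xi\rangle=-\tfrac12 X\Vert\xi\Vert^2$, which produces the $\mu'/\mu$ term. You instead build adapted coordinates $F(s,t)=\phi_t(\gamma(s))$ in which the metric is the warped product $ds^2+\mu(s)^2\,dt^2$ and apply the coordinate formula for the Laplacian directly. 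Your route is more elementary and self-contained (no external lemma is needed, and the constancy of $\Vert\xi\Vert$ along fibers is established explicitly), whereas the paper's route is more conceptual and would generalize immediately to submersions over higher-dimensional bases.
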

    \begin{proof}
        Let $\pi: (M,\langle,\rangle_M)\to (B,\langle,\rangle_B)$ be a Rieamannian submersion. The Laplacian of a basic function ($\widetilde f= f\circ \pi,\quad  f:B\to \mathbb{R}$) is given by (see \cite[Lemma 3.1]{Bessa2012} for instance) 
\begin{equation}
  (\triangle^M \widetilde f)_q=(\triangle^B f)_x-\langle\nabla^B  f_p,\, d\pi_q(H_q)\rangle_B,
\end{equation}
where $x=\pi(q)$, and $H$ is the mean curvature vector field of the fiber $\pi^{-1}(p)$.  In our case taking $B=\mathbb{R}$ and $M=S$ this implies
\begin{equation}\label{eq:kvara}
  (\triangle^S \widetilde f)_q=\left.\frac{d^2f}{dx^2}\right\vert_{x=\pi(q)}-\left.\frac{df}{dx}\right\vert_{x=\pi(q)}\left\langle\frac{\partial}{\partial x},\, d\pi_q(H_q)\right\rangle_\mathbb{R},
\end{equation}
where $\frac{\partial}{\partial x}$ is the unit vector tangent to $\mathbb{R}$. Taking into account that the mean curvature vector field is given by
$$
H=\nabla_{\frac{\xi}{\mu}}\frac{\xi}{\mu}=\frac{1}{\mu^2}\nabla_\xi \xi.
$$
Then, 
\begin{equation}
\begin{aligned}
    \left\langle\frac{\partial}{\partial x},\, d\pi_q(H_q)\right\rangle_\mathbb{R}=&\frac{1}{\mu^2}\left\langle\frac{\partial}{\partial x},\, d\pi(\nabla_\xi \xi)\right\rangle_\mathbb{R}=\frac{1}{\mu^2}\left\langle X,\, \nabla_\xi \xi\right\rangle\\
    =&\frac{-1}{\mu^2}\left\langle \xi,\, \nabla_X \xi\right\rangle=\frac{-1}{2\mu^2}X\Vert\xi\Vert^2=\frac{-1}{2\mu^2}\frac{d}{dx}\mu^2
\end{aligned}    
\end{equation}
where $X$ is an horizontal lift of $\frac{\partial}{\partial x}$ and we have used that $\xi$ is a Killing vector field. Finally equation \eqref{eq:kvara} becomes
 \begin{equation}
  (\triangle^S \widetilde f)_q=\left.\frac{d^2f}{dx^2}\right\vert_{x=\pi(q)}+\left.\frac{df}{dx}\right\vert_{x=\pi(q)}\frac{1}{\mu(x)}\left.\frac{d\mu}{dx}\right\vert_{x=\pi(q)},
\end{equation}
and the proposition is proved.\end{proof}

By using the previous lemmas,    instead of study the paraboilcity of a complete surface $(S,g)$ which admits a Killing submersion  $\pi:S\to M^1$  to the connected $1$-dimensional Riemannian manifold $(M,g_{\rm can})$, we will study the parabolicity of the conformally equivalent Riemannian manifold $(S, \frac{1}{\mu^2}g)$ where $\mu(x)$ is the norm of the never vanishing Killing vector field $\Vert \xi(p)\Vert$ for any $p\in \pi^{-1}(x)$. By the lemma \ref{lemconf} $(S,g)$ is parabolic if and only if $(S, \frac{1}{\mu^2}g)$ is parabolic.  Observe that since $S$ is complete $\pi$ needs to be a surjective map. Moreover by lemma \ref{lemdua}, the map $\pi$ induces a Riemannian submersion from $(S, \frac{1}{\mu^2}g)$ to $(M,\frac{1}{\mu^2}g_{\rm can} )$ . Since this submersion has a Killing vector field of constant norm,  $(S, \frac{1}{\mu^2}g)$ is a surface with non-negative Gaussian curvature  by lemma \ref{lemtria}. When $M=\mathbb{S}^1(R)$ or $M=\mathbb{R}^1$ and 
    $$
\int_{-\infty}^0 \frac{dx}{\mu(x)}=\infty\quad{\rm and}\quad \int_0^\infty \frac{dx}{\mu(x)}=\infty.
$$
we have that $(M,\frac{1}{\mu^2} g_{\rm can})$ is complete. Hence by using lemma \ref{lemdua}  $S$ is a complete surface with non-negative Gaussian curvature and thus by lemma \ref{lemkvara}, the surface $S$ is a parabolic manifold.   On the other hand, if we assume that
$$
\int_{-\infty}^0 \frac{dx}{\mu(x)}<\infty\quad\left({\rm or}\quad \int_0^\infty \frac{dx}{\mu(x)}<\infty\right).
$$
By lemma \ref{lemkvina} the  function $F:M\to \mathbb{R}$
$$
F(p):=\int_{\pi(p)}^0 \frac{dx}{\mu(x)}
$$
is a bounded and harmonic function (which implies that $S$ is a non parabolic manifold). This can sumarized in the following theorem which implies the Main theorem of the parer:

\begin{theorem}\label{teo0}Let $\pi:S\to M^1$ be a Killing Submersion from a complete and $2$-dimensional Riemannian manifold $(S,g)$ to the connected $1$-dimensional Riemannian manifold $(M,g_{\rm can})$. Let us denote by $\mu(x)$ the norm of the Killing vector field $\Vert \xi(p)\Vert$ for any $p\in \pi^{-1}(x)$. Then, 
\begin{enumerate}
    \item If $M=\mathbb{S}^1(R)$ endowed with the canonical metric tensor, $S$ is parabolic.
    \item If $M=\mathbb{R}$ with its canonical metric tensor, $S$  is a parabolic manifold iff
$$
\int_{-\infty}^0 \frac{dx}{\mu(x)}=\infty\quad{\rm and}\quad \int_0^\infty \frac{dx}{\mu(x)}=\infty.
$$
\end{enumerate}    
\end{theorem}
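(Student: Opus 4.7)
The plan is to chain the preparatory lemmas into a short reduction argument. First I would use Lemmas~\ref{lemconf} and \ref{lemdua} to replace $(S,g)$ by the conformally rescaled surface $(S,\tilde g)$ with $\tilde g=g/\mu^2$, using the basic factor $(1/\mu)\circ\pi$. By Lemma~\ref{lemconf} parabolicity is preserved; by Lemma~\ref{lemdua}, $\pi:(S,\tilde g)\to (M, g_{\rm can}/\mu^2)$ remains a Killing submersion with the same Killing vector field $\xi$, whose $\tilde g$-norm is now identically one. Moreover that lemma also provides completeness of $(S,\tilde g)$ whenever the rescaled base is complete.

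For the sufficient direction I check completeness of the rescaled base. When $M=\mathbb{S}^1(R)$ the rescaled base is compact, hence complete. When $M=\R$ and both integrals $\int_{-\infty}^{0}dx/\mu$ and $\int_0^{\infty}dx/\mu$ diverge, the rescaled line $(\R,dx^2/\mu^2)$ has infinite length on both sides of $0$, so it is complete. In either case $(S,\tilde g)$ is a complete Riemannian surface carrying a Killing field of constant norm, so Lemma~\ref{lemtria} yields $K_{\tilde g}\geq 0$, and Lemma~\ref{lemkvara} concludes parabolicity of $(S,\tilde g)$, and hence of $(S,g)$.

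For the necessary direction in case~(2), suppose $M=\R$ and at least one of the integrals is finite. Define the basic function $F(p)=\int_0^{\pi(p)}dx/\mu(x)$ on $S$. Applying Lemma~\ref{lemkvina} to $f(x)=\int_0^x dy/\mu(y)$, for which $\mu f'\equiv 1$, gives $\Delta^S F=0$. Since $F'=1/\mu>0$, $F$ is non-constant. If $\int_0^{\infty}dx/\mu<\infty$ then $F\leq \int_0^{\infty}dx/\mu$ everywhere, so $F$ itself is an upper bounded non-constant harmonic (in particular subharmonic) function; if instead $\int_{-\infty}^{0}dx/\mu<\infty$, then $-F$ has the same property. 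Either way the definition of parabolicity recalled in the introduction is violated.

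The main obstacle is essentially dissolved once the preparatory lemmas are in hand: the proof hinges on noticing that the integrability of $1/\mu$ simultaneously controls two different things --- the completeness of the conformally rescaled base on the sufficient side, and the upper-boundedness of the canonical harmonic primitive $F$ on the necessary side. The only minor care needed is to choose the correct sign of $F$ in the necessity argument when only one of the two integrals converges, since the ``upper bounded subharmonic implies constant'' definition used in the paper requires boundedness from above rather than on both sides.
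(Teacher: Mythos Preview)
Your proof is correct and follows essentially the same route as the paper: conformally rescale by $1/\mu^2$ via Lemmas~\ref{lemconf} and \ref{lemdua}, use completeness of the rescaled base together with Lemmas~\ref{lemtria} and \ref{lemkvara} for sufficiency, and for necessity exhibit the harmonic primitive $F(p)=\int_0^{\pi(p)}dx/\mu(x)$ computed through Lemma~\ref{lemkvina}. Your handling of the sign of $F$ in the necessity argument is in fact slightly more careful than the paper's, which simply calls $F$ ``bounded'' without distinguishing which integral is assumed finite.
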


\section{Application of the Main Theorem: Parabolicity of invariant surfaces in homogeneous 3-manifolds}
In this section we use \textbf{Main Theorem} to study the parabolicity of invariant surfaces with some geometric properties in 3-dimensional homogeneous Riemannian manifold studied in different works by many authors.

\subsection{Parabolicity of invariant surfaces with constant mean curvature in $Sol_3$}
In \cite{lopez2014invariant}, Lopez and Munteanu give a description of invariant surfaces with either constant curvature in the Thurston geometry $Sol_3$. Recall that $Sol_3$ is isometric to $\R^3$ endowed with the metric \[e^{2z} dx^2+e^{-2z}dy^2+dz^2\] and the component of the identity its isometry group is generated by the following families of isometries:\\
\resizebox{\hsize}{!}{$T_1^c(x,y,z)=(x+c,y,z),\quad T_2^c(x,y,z)=(x,y+c,z),\quad T_3^c(x,y,z)=(e^{-c}x,e^{c}y,z+c).$}
\\
In particular, in \cite{lopez2014invariant} they studied surfaces with constant mean curvature or constant Gaussian curvature invariant with respect to $T_1$, finding a profile curve $\gamma(s)=(0,y(s),z(s))$ parameterized by arc length. Notice that the Killing vector field $\partial_x$ associated to $T_1$ has norm $\|\partial_x\|=e^z$ and it is orthogonal to $\gamma'$, so we can easely apply our result just studyng $\int e^{-z(s)}ds$. 

The classification of minimal surfaces \cite[Theorem 3.1]{lopez2014invariant} assures that the only $T_1$-invariant minimal surfaces of $Sol_3$ are:
\begin{enumerate}
    \item a leaf of the foliation $\left\{Q_t=\left\{(x,t,z)\mid \, x,z\in\R\right\}\right\}_{t\in\R}$, which are known to be isometric to the hyperbolic plane;
    \item a leaf of the foliation $\left\{R_t=\left\{(x,y,t)\mid \, x,y\in\R\right\}\right\}_{t\in\R}$, which are known to be isometric to the Euclidean plane;
    \item the surfaces $S_{\theta_0,a}$, for $\theta_0\in\R\setminus\left\{k\pi:\,k\in\mathbb{Z}\right\}$ and $a\in\R$, that are generated by translating the profile curve $$\gamma(s)=\left(0,a+ e^{s\sin(\theta_0)},\log\left(\tan(\theta_0)e^{s\sin(\theta_0)}\right)\right).$$
\end{enumerate}
Obviously, the surfaces $Q_t$ are hyperbolic, while the surfaces $R_t$ are parabolic. It remains to study the parabolicity of $S_{\theta_0,a}.$ Since 
\[\int e^{-\log\left(\tan(\theta_0)e^{s\sin(\theta_0)}\right)}ds=e^{-s\sin(\theta_0)}\cot(\theta_0)\csc(\theta_0),\]
is a bounded function, \textbf{Main Theorem} implies that $S_{\theta_0,a}$ are hyperbolic surfaces. 

For the constant mean curvature case, \cite[Theorem 3.2]{lopez2014invariant} proves that the $z$-coordinate of the profile curve is bounded. In particular, \[\lim_{s\to\pm\infty}\int e^{-z(s)}ds=\pm\infty\] and \textbf{Main Theorem} implies that the respective invariant $H$-surfaces are parabolic.

\subsection{Parabolicity of vertical cylinders in Killing submersion}
An easy way to study surface surfaces immersed in a three-manifold which are invariant with respect to a Killing vector field of the ambient space is by studying vertical cylinders in Killing submersions.
We recall that a Killing submersion is a Riemannian submersion $\pi\colon\E\to M$ from a three-dimensional manifold $\E$ onto a surface $(M,g)$, both connected and orientable, such that the fibers of $\pi$ are integral curves of a Killing vector field $\xi\in\mathfrak{X}(\E).$ These spaces have been completely classified in terms of the Riemannian surface $(M,g)$, the length of the Killing vector field $\mu=\|\xi\|$ and the so called bundle curvature $\tau,$ defined such that $\tau(p)=\frac{-1}{\mu(p)}\langle\overline\nabla_u\xi,v\rangle$, where $\{u,v,\xi_p/\mu(p)\}$ is a positively oriented orthonormal basis of $T_p\E$ (see \cite[Section 2]{LerMan17}). In this setting a vertical cylinder is a surface $S$ always tanget to $\xi$. In particular, $S$ is invariant with respect to the isometries associated to $\xi$ and it projects through $\pi$ onto a curve $\gamma\subset M$. Since $\pi$ is a Riemannian submersion, to study $\int_\Gamma\frac{1}{\mu(\Gamma)}$, where $\Gamma\subset S$ is a curve parameterized by arc length and orthogonal to $\xi$, is equivalent to study $\int_\gamma\frac{1}{(\pi_* \mu)(\gamma)}$ where $\gamma=\pi(S)$ is parameterezid by arc length in $(M,g)$. In particular, in this setting, the Main Theorem, read as follows.

\begin{theorem}
    Let $\pi\colon \E\to (M,g)$ be a Killing submersion with Killing length $\mu$. Then, for any complete curve $\gamma\subset M$, $\pi^{-1}(\gamma)$ is a parabolic surface if and only if $\gamma$ is complete in $(M,\tfrac{1}{\mu^2}g)$.    
\end{theorem}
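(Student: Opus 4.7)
The strategy is to reduce to the Main Theorem applied to the complete surface $S := \pi^{-1}(\gamma)$, which is invariant under the flow of $\xi$. The substantive work is only to translate the integral condition appearing in the Main Theorem into the single clean statement that $\gamma$ is complete in the conformal metric $\tfrac{1}{\mu^2}g$.

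First, I parameterize $\gamma$ by arc length in $(M,g)$ and let $\Gamma\subset S$ be a horizontal lift. Since $\pi|_S$ is a Riemannian submersion with one-dimensional target and fibres equal to the orbits of $\xi$, the curve $\Gamma$ is automatically parameterized by arc length in $S$ and satisfies $\langle \dot\Gamma,\xi\rangle = 0$. Completeness of $\Gamma$ follows from completeness of $S$ (and of $\gamma$) together with local triviality of the submersion. Because $\xi$ is Killing, $\mu=\|\xi\|$ is constant on fibres, so $\|\xi(\Gamma(s))\|=\mu(\gamma(s))$. Therefore
\[
\int \frac{ds}{\|\xi(\Gamma(s))\|}=\int \frac{ds}{\mu(\gamma(s))}=\mathrm{Length}_{\tfrac{1}{\mu^2}g}(\gamma),
\]
since $\gamma$ is $g$-arc length parameterized.

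Using this translation, it remains to compare the conclusion of the Main Theorem with completeness of $\gamma$ in $(M,\tfrac{1}{\mu^2}g)$. If $\gamma$ is a closed loop in $M$, it is automatically complete in every conformal metric. In parallel, $\Gamma$ is either compact --- in which case case (1) of the Main Theorem yields parabolicity --- or non-compact, and then $\mu$ along $\Gamma$ is periodic, forcing both tail integrals to diverge, so parabolicity follows from case (2) or (3) according to whether the fibres of $\pi$ are compact. If instead $\gamma\cong\mathbb{R}$, then completeness of $\gamma$ in $(M,\tfrac{1}{\mu^2}g)$ is tantamount to $\int_{-\infty}^0 \tfrac{ds}{\mu(\gamma(s))} = \infty = \int_{0}^{\infty}\tfrac{ds}{\mu(\gamma(s))}$, which is exactly the condition of cases (2) and (3) of the Main Theorem.

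The main potential obstacle I foresee is the bookkeeping in the compact-$\gamma$ case: one must ensure that, even when the holonomy of $\pi$ along $\gamma$ is non-trivial (so that $\Gamma$ fails to be compact), we still land in a parabolic case of the Main Theorem, and that this matches the trivial fact that compact curves are complete in every conformal metric. Once this topological subtlety is handled, the proof reduces to the direct identification of integrals carried out above.
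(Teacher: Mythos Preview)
Your proposal is correct and follows essentially the same approach as the paper. The paper derives this theorem as a direct reformulation of the Main Theorem, noting in one sentence that, since $\pi$ is a Riemannian submersion, studying $\int_\Gamma \tfrac{1}{\mu(\Gamma)}$ along the horizontal lift $\Gamma\subset S$ orthogonal to $\xi$ is equivalent to studying $\int_\gamma \tfrac{1}{\mu(\gamma)}$ along the $g$-arc-length parameterized base curve $\gamma=\pi(S)$; your write-up simply makes the case analysis (compact versus non-compact $\gamma$, and the holonomy subtlety) more explicit than the paper does.
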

\begin{remark}
    It is interesting to notice that, given a curve $\gamma\subset M$, while the conformal metric $\tfrac{1}{\mu^2}g$ gives us information about the parabolicity of $S=\pi^{-1}$, the conformal metric $\mu^2 g$ gives us information about its mean curvature (see \cite[Proposition 2.3]{DMN}).
\end{remark}

The parabolicity of surfaces of revolution (or surfaces with ends of revolution) in spaces forms of constant sectional curvature has been studied in \cite{Gimeno2016143}. Three dimensional simply connected spaces forms of constant sectional curvature are the only manifolds with a group of isometries of dimension $6$ and can be classified (up to isometries) by their sectional curvatures.  We can label these spaces hence as $\mathbb{M}^3(\kappa)$. The spaces $\mathbb{M}^3(\kappa)$ includes the Euclidean space for $\kappa=0$, the spheres for $\kappa>0$ and the Hyperbolic space for $\kappa<0$.

There are no three dimensional spaces with a group of isometries of dimension $5$. The simply-connected $3$-dimensional spaces with a group of isometries of dimension $4$ are Killing submersions and they can be classified (up to isometries) by the curvature $\kappa$ of the base manifold and by the torsion $\tau$ of the fibers.   We can label these spaces hence as $\mathbb{E}^3(\kappa,\tau)$ (because they are endowed with the Riemannian submersion $\pi:\mathbb{E}^3(\kappa,\tau)\to \mathbb{M}^2(\kappa)$ with constant bundle curvature $\tau$). In what follows we give a general condition to guarantee the parabolicity of rotational surfaces in the $\mathbb{E}^3(\kappa,\tau)$-spaces.

The canonical rotational model describing the $\E^3( \kappa,\tau )$-spaces is given by $(\Omega\times\R,ds^2)$, where
\[ \begin{array}{l}
      \Omega=\left\{(x,y)\in\R^2\mid \lambda(x,y)>0\right\},\qquad
      \lambda(x,y)=\frac{1}{1+\tfrac{\kappa}{4}(x^2+y^2)},\\
    ds^2=\lambda^2(x,y) (dx^2+dy^2) + (\lambda(x, y)\tau (y dx - x dy) + dz)^2.
\end{array} \]
Since the Killing vector field $\|\partial_z\|$ has unitary norm, every surface that is invariant with respect to $\partial_z$ is parabolic. So, we focus on studying the parabolicity rotational surfaces.
First notice that if $\kappa>0$, $\E(\kappa,\tau)$ is a Berger sphere, in particular, since it is compact, every Killing vector field $\xi$ has bounded norm and from the Main Theorem we deduce that every surface that is invariant with respect to $\xi$ is parabolic.
When $\kappa\leq0$, we consider the Killing vector field $\xi(x,y)=-y \partial_x +x \partial_y$ generating the rotation around the $z$-axis and to describe $\E(\kappa,\tau)$ as a Killing submersion with respect to $\xi$, we use the cylindrical coordinates 
\[x(r,\theta)=r\cos(\theta),\qquad y(r,\theta)=r\sin(\theta).\]
We obtain that the space $\E(\kappa,\tau)$ minus its $z$-axis is isometric to a quotient of $(\R^2_\kappa\times\R,ds^2_{rot})$, where $\R^2_\kappa=\left\{(r,z)\in\R^2|r>0,\,4+\kappa r^2>0\right\}$ and 
\begin{equation*}
    ds^2_{rot}=\frac{16 dr^2}{(4 + \kappa r^2)^2} + \frac{dz^2}{(1 + r^2 \tau^2)}
    + \left(\frac{2r \sqrt{1 + r^2 \tau^2} }{
    4 + \kappa r^2}\right)^2 \left(d\theta -  \frac{(4+\kappa r^2)\tau}{4(1+r^2\tau^2)}dz\right)^2.
\end{equation*}
Here, $\xi=\partial\theta$ and the Killing submersion with respect to $\xi$ is such that
\[(M,g)=\left(\R^2_\kappa,\frac{16}{(4 + \kappa r^2)^2} dr^2 + \frac{1}{(1 + r^2 \tau^2)}
   dz^2\right)\quad \textrm{ and }\quad \mu(r,z)=\frac{2r \sqrt{1 + r^2 \tau^2} }{
    4 + \kappa r^2}\]
and then the conformal metric tensor $g/\mu^2$ is given by
\begin{equation}\label{eq:unua}
\frac{1}{\mu^2}g=\frac{4}{r^2(1+r^2\tau^2)}dr^2+\frac{(4+\kappa r^2)^2}{4r^2(1+r^2\tau^2)^2}dz^2.  
\end{equation}

In particular, every complete curve $\gamma$ in $(M,g)$ generetes a complete parabolic invariant surface in $\mathbb{E}^3(\kappa,\tau)$ if and only if $\gamma$ is complete with respect Using the metric \eqref{eq:unua}. For example, using this tool, it is easy to see that the minimal umbrellas of the Heisenberg group $\E^3(0,\tau)$ are hyperbolic, without computing their extrinsic area growth (see \cite{ManNel17}). It is sufficient to consider the curve $\gamma(t)=(t,0)$. Its norms is in the conformal metric is $\|\gamma(t)\|=\frac{2}{t\sqrt{1+t^2\tau^2}}<t^{-3/2}$ for $t>\frac{2+\sqrt{4-\tau^2}}{\tau^2}$ or $\tau>2$, that is, $\gamma$ is not complete in the conformal metric, thus $\pi^{-1}(\gamma)$ is hyperbolic.

We can also use this tool to study the parabolicity of the rotational surfaces of constant mean curvature in $\E^3(-1,\tau)$ described in \cite{Pe}. In particular, Peñafiel shows that a rotational surface of constant mean curvature $H\in\R$ is parameterized by $\gamma_d(t)=\left(\tanh\left(\tfrac{\sqrt{t}}{2}\right),u_d(t)\right),$ where 
\[u_d(t)=\int \frac{(2H\cosh(r)+d)\sqrt{1+4\tau^2\tanh^2\left(\tfrac{r}{2}\right)}}{\sqrt{\sinh^2(r)-(2H\cosh(r)+d)^2}},\]
with $d\in\R.$
When $d=-2H$, the rotation of $\gamma_d(t)$ generates an entire graph.
When $H=0$, the norm of $\gamma'(t)$ with respect to \eqref{eq:unua} is 
\[\|\gamma'(t)\|=2\sqrt{\frac{1}{\sinh^2(t)\left(1+\tau^2\tanh\left(\tfrac{t}{2}\right)\right)}}\simeq 4\sqrt{\frac{1}{1+\tau^2}}e^{-t}+o(e^{-2t}).\]
In particular, $\lim_{t\to\infty}\int\|\gamma'(t)\|$ is convergent and the rotational end generated by the rotation of $\gamma$ is hyperbolic. Furthermore, since the difference between $u_d$ and $u_{-2H}$ is bounded, the surface generated by rotating any $\gamma_d$ is hyperbolic. 

On the contrary, when $H=1/2$, we get that 
\begin{center}
    \resizebox{\hsize}{!}
{$
\|\gamma'(t)\|=\left(\frac{(5+3\cosh(t))^2(1-4\tau^2+\cosh(t)+4\tau^2\cosh(t))}{8(1-\tau^2+(1+\tau^2)\cosh(t))^2}+\frac{4}{\sinh^2(t)\left(1+4\tau^2\tanh\left(\tfrac{t}{2}\right)\right)}\right)^{\tfrac{1}{2}}
$}
\end{center}
which diverges for $t\to+\infty$. That is, the entire rotational graph with critical constant mean curvature is parabolic.

\def\cprime{$'$} \def\cprime{$'$}
\def\polhk#1{\setbox0=\hbox{#1}{\ooalign{\hidewidth
			\lower1.5ex\hbox{`}\hidewidth\crcr\unhbox0}}}
\def\polhk#1{\setbox0=\hbox{#1}{\ooalign{\hidewidth
			\lower1.5ex\hbox{`}\hidewidth\crcr\unhbox0}}}
\def\polhk#1{\setbox0=\hbox{#1}{\ooalign{\hidewidth
			\lower1.5ex\hbox{`}\hidewidth\crcr\unhbox0}}} \def\cprime{$'$}
\def\cprime{$'$} \def\cprime{$'$} \def\cprime{$'$} \def\cprime{$'$}

\end{document}